\theoremstyle{definition}
\theoremstyle{plain}
\newtheorem{thm}{Theorem}
\newtheorem{lem}{Lemma}
\begin{document}
\title{A note on five dimensional kissing arrangements}
\author{Ferenc Sz\"oll\H{o}si}
\thanks{Date: \today. Preprint}
\address{Department of Mathematical Sciences, Interdisciplinary Faculty of Science and Engineering, Shimane University, Matsue, Shimane, 690-8504, Japan}
\email{szollosi@riko.shimane-u.ac.jp}
\begin{abstract}
The kissing number $\tau(d)$ is the maximum number of pairwise non-overlapping unit spheres each touching a central unit sphere in the $d$-dimensional Euclidean space. In this note we report on how we discovered a new, previously unknown arrangement of $40$ unit spheres in dimension $5$. Our arrangement saturates the best known lower bound on $\tau(5)$, and refutes a `belief' of Cohn--Jiao--Kumar--Torquato.
\end{abstract}
\maketitle
\section{Introduction}
This paper is based on a talk given by the author during the ``Workshop on Orthogonal designs and related Combinatorics'' held in Meiji University in Tokyo on January 21, 2023.

Given a central unit sphere in the $d$-dimensional Euclidean space $\mathbb{R}^d$, surrounded by $n\geq 1$ pairwise non-overlapping unit spheres at distance $2$ is called a \emph{kissing arrangement}. The main problem of interest is the determination of the maximum number $\tau(d)$ of such spheres. This is a challenging open problem, called the \emph{kissing number problem} in geometry. For background and references we refer the reader to \cite{CJKT}, \cite{CS}, \cite{MIK}, and \cite{M}.

It is useful to reformulate this problem in the language of algebra. The kissing number problem asks for the maximum number $n$ of unit vectors $v_1$, $v_2$, $\dots$, $v_n$ in $\mathbb{R}^d$, such that their pairwise inner product satisfies $\left\langle v_i,v_j\right\rangle\leq \frac{1}{2}$ for every $i\neq j\in\{1,2,\dots,n\}$. It is easy to see that the vertices of the cross-polytope, formed by the union of the standard basis and its negative, is a kissing arrangement, thus $\tau(d)\geq 2d$. The exact value is only known for $d\in\{1,2,3,4,8,24\}$.

In this paper we report on some computational techniques leading to the discovery of a new, previously unknown kissing arrangement of $40$ spheres in $\mathbb{R}^5$. To this date this is the third such known arrangement in dimension $5$ (see reference \cite{L} from 1967) and testifies our extremely limited understanding of the structure of optimal high-dimensional sphere packings.

\section{The best known kissing arrangements in dimension five}
Since the value of $\tau(d)$ has been determined up to $d\leq 4$, see \cite{M}, the first open case is dimension $5$. Here two kissing arrangements showing $\tau(5)\geq 40$ were previously known \cite{CJKT}, \cite{KZ}, \cite{L}. We briefly describe these arrangements as follows.

First, the set of all $2d(d-1)$ unit vectors in $\mathbb{R}^d$, $d\geq 2$ having exactly two non-zero coordinates of equal absolute value $1/\sqrt2$
\[\mathcal{D}_d:=\{\sigma([\pm1/\sqrt2,\pm1/\sqrt2,0,0,\dots,0])\colon\sigma\in S_d\}\]
forms an antipodal kissing arrangement: if $v\in\mathcal{D}_d$ then $-v\in\mathcal{D}_d$. Fixing $d=5$ shows that $\tau(5)\geq 40$. The profile (i.e., the multiset of pairwise inner products) of $\mathcal{D}_5$ is given by:
\[\pi(\mathcal{D}_5)=\{[-1]^{40},[-1/2]^{480},[0]^{560},[1/2]^{480},[1]^{40}\}.\]
The profile is clearly an invariant up to change of basis.

Rather interestingly, another example can be obtained by rotating the $8$ vectors among $\mathcal{D}_5$ with last coordinate $-1/\sqrt2$ by the orthogonal matrix
\[H=\frac{1}{2}\left[\begin{array}{rrrrr}
-1 & 1 & 1 & 1 & 0\\
1 & -1 & 1 & 1 & 0\\
1 & 1 & -1 & 1 & 0\\
1 & 1 & 1 & -1 & 0\\
0 & 0 & 0 & 0 & 2\end{array}\right].\]

This gives rise to the non-antipodal kissing arrangement (see \cite{L})
\[\mathcal{L}_5:=(\mathcal{D}_5\setminus\{v\in\mathcal{D}_5\colon v_5=-1/\sqrt2\})\cup\{vH\colon v\in\mathcal{D}_5, v_5=-1/\sqrt2\}\]

with profile
\[\pi(\mathcal{L}_5)=\{[-1]^{24},[-3/4]^{64},[-1/2]^{384},[-1/4]^{64},[0]^{544},[1/2]^{480},[1]^{40}\}.\]
It was believed that these two arrangements are the only possibilities \cite{CJKT}, \cite{CS}. In the next section we are going to mention three computer-aided methods to discover kissing arrangements, and use them to construct a new, previously unknown arrangement of $40$ unit spheres in dimension $5$.

\section{In search of kissing arrangements}
We call the distinct unit vectors $x,y\in\mathbb{R}^d$ \emph{compatible}, if they represent non-overlapping spheres, that is if $\left\langle x,y\right\rangle\leq \frac{1}{2}$.
\subsection{Method 1: The multiangular cloud}\label{ss1} Fix a dimension $d$, a (finite!) set of inner products $\mathcal{A}$ (with $\max\mathcal{A}\leq\frac{1}{2})$, and a $d$-dimensional basis $B_1$, $B_2$, $\dots$, $B_d$ of pairwise compatible unit vectors, represented by the rows of a $d\times k$ matrix $B$. The Gram matrix of the basis vectors $G=BB^T$ is an invertible $d\times d$ matrix.

Now assume that a unit vector $v\in\mathbb{R}^k$ is (i) in the span of the basis vectors; and (ii) not only compatible with each of the basis vectors, but rather $\left\langle v, B_i\right\rangle \in \mathcal{A}$ for every $i\in\{1,\dots,d\}$. Then, the first condition says that we have a coordinate vector $c\in\mathbb{R}^d$, such that $v=cB$. The second condition says that we have a $d$-tuple $t\in\mathcal{A}^d$, such that $Bv^T=t^T$. Combining these two we obtain $v=tG^{-1}B$. Since $v$ must be a unit vector, we have in addition $tG^{-1}t^{T}=1$. We call the (finite) set of all such vectors
\[\mathcal{C}_{\mathcal{A},B}:=\{tG^{-1}B\colon t\in\mathcal{A}^d, tG^{-1}t^{T}=1\}\subset\mathbb{R}^{k}\]
the \emph{multiangular cloud surrounding the basis $B$ with respect to the angle set $\mathcal{A}$}. Clearly, any subset of pairwise compatible vectors of $\mathcal{C}_{\mathcal{A},B}$ together with the rows of $B$ forms a kissing arrangement.

We are interested in finding the maximum (that is: the largest) subset(s). This in turn boils down to a clique search \cite{PO} in the graph $\Gamma$, where the vertex set is represented by the vectors in the cloud, and edges are precisely between compatible vectors. The size of the maximum clique in $\Gamma$ is denoted by $\omega(\Gamma)$ as usual.
\begin{lem}
With the notations introduced in Section~\ref{ss1}, we have $\tau(d)\geq d+\omega(\Gamma)$.
\end{lem}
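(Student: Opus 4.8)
The plan is to exhibit an explicit kissing arrangement of $d+\omega(\Gamma)$ unit vectors in an isometric copy of $\mathbb{R}^d$ by combining the basis with a maximum clique of the cloud. First I would fix a clique $K\subseteq\mathcal{C}_{\mathcal{A},B}$ attaining $|K|=\omega(\Gamma)$ and set $S:=\{B_1,\dots,B_d\}\cup K$. Since every cloud vector has the form $tG^{-1}B$, every member of $S$ lies in the $d$-dimensional subspace $U:=\mathrm{span}(B_1,\dots,B_d)\subseteq\mathbb{R}^k$; one may therefore fix a linear isometry $U\cong\mathbb{R}^d$ and transport $S$ there, which changes no norm and no inner product. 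The claim is then that $S$ is a kissing arrangement of the stated size.

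Next I would verify the defining inequalities, splitting the pairs into three cases. Every vector of $S$ is a unit vector: the $B_i$ by hypothesis, and each cloud vector $v=tG^{-1}B$ because the constraint $tG^{-1}t^T=1$ gives $\langle v,v\rangle=tG^{-1}BB^TG^{-1}t^T=tG^{-1}t^T=1$, using $BB^T=G$. For two basis vectors, $\langle B_i,B_j\rangle\leq\frac12$ by the pairwise compatibility assumption on the basis. For two clique vectors, $\langle u,v\rangle\leq\frac12$ because $uv$ is an edge of $\Gamma$, i.e. $u$ and $v$ are compatible by the definition of the graph. The remaining mixed case is where the structure of the cloud does the work: for $v=tG^{-1}B\in K$ and a basis vector $B_i$ one computes $\langle B_i,v\rangle=(Bv^T)_i=(BB^TG^{-1}t^T)_i=(GG^{-1}t^T)_i=t_i\in\mathcal{A}$, and since $\max\mathcal{A}\leq\frac12$ this is at most $\frac12$. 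Thus all pairwise inner products among vectors of $S$ are bounded by $\frac12$.

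Finally I would check that the listed vectors are genuinely distinct, so that $|S|=d+\omega(\Gamma)$. The $B_i$ are distinct since they are linearly independent ($G$ is invertible), and the members of $K$ are distinct as vertices of $\Gamma$; moreover a cloud vector cannot coincide with any $B_i$, because $\langle B_i,v\rangle\in\mathcal{A}$ forces $\langle B_i,v\rangle\leq\frac12<1=\langle B_i,B_i\rangle$. Hence $S$ is a kissing arrangement of $d+\omega(\Gamma)$ unit vectors realized in $\mathbb{R}^d$, and $\tau(d)\geq d+\omega(\Gamma)$ follows directly from the definition of $\tau$. I do not expect a genuine obstacle, as the statement is essentially a bookkeeping assembly of the cloud's defining properties; the one point deserving care is the mixed computation $\langle B_i,v\rangle=t_i$, since it is precisely the identity $Bv^T=t^T$ built into the cloud that guarantees basis–clique compatibility for free, leaving the clique search to enforce only the cloud–cloud edges.
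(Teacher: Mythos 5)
Your proof is correct and matches the paper's intended argument: the paper states this lemma without a formal proof, relying on the immediately preceding observation that ``any subset of pairwise compatible vectors of $\mathcal{C}_{\mathcal{A},B}$ together with the rows of $B$ forms a kissing arrangement,'' which is precisely what you verify. Your write-up simply fills in the details left implicit there --- the unit-norm computation $tG^{-1}t^T=1$, the mixed inner product $\langle B_i,v\rangle=t_i\in\mathcal{A}\leq\frac12$, and the distinctness of the $d+\omega(\Gamma)$ vectors inside the $d$-dimensional span of $B$.
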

\subsection{Method 2: Extending a basis of $\mathbb{R}^d$ to a basis of $\mathbb{R}^{d+1}$}\label{ss2}
Let $d$, $t$, $\mathcal{A}$, $B$, and $G$ be the same as defined in Section~\ref{ss1}. Assume further that $k=d+1$, and therefore there is a unit vector $B_{d+1}\in\mathbb{R}^{d+1}$ orthogonal to each vectors of the linearly independent set $B$. Once again we consider vectors of the form $v=tG^{-1}B\in\mathbb{R}^{d+1}$ in the linear span of the vectors $B$. However, if this $v$ is not a unit a vector, but shorter, then we may consider instead the two vectors $v\pm\sqrt{1-\left\langle v,v\right\rangle}B_{d+1}$. The finite set of all such vectors
\[\mathcal{C}_{\mathcal{A},B}':=\{tG^{-1}B\pm\sqrt{1-tG^{-1}t^T}B_{d+1}\colon t\in\mathcal{A}^d, tG^{-1}t^{T}\leq 1\}\subset\mathbb{R}^{d+1},\]
is the multiangular cloud in $\mathbb{R}^{d+1}$ surrounding the subset $B$ with respect to the angle set $\mathcal{A}$. Similarly, a maximum subset of $\mathcal{C}_{\mathcal{A},B}'$ of pairwise compatible vectors can be found using a clique search in the corresponding graph $\Gamma'$.
\begin{lem}\label{lem2}
With the notations introduced in Section~\ref{ss2}, we have $\tau(d+1)\geq d+\omega(\Gamma')$.
\end{lem}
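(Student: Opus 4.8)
The plan is to mirror the argument behind the first lemma, adapting it to the enlarged ambient space $\mathbb{R}^{d+1}$. The goal is to exhibit a kissing arrangement in $\mathbb{R}^{d+1}$ consisting of exactly $d+\omega(\Gamma')$ unit vectors, all pairwise at inner product at most $\tfrac12$. First I would verify that every vector appearing in $\mathcal{C}'_{\mathcal{A},B}$ is genuinely a unit vector: for a candidate $v=tG^{-1}B$ with $tG^{-1}t^T\le 1$, the component $cB$ in the span of $B$ has squared length $tG^{-1}t^T$ (using $v=cB$ with $c=tG^{-1}$ and $\langle v,v\rangle=cGc^T=tG^{-1}t^T$), while the added term $\pm\sqrt{1-tG^{-1}t^T}\,B_{d+1}$ is orthogonal to $cB$ because $B_{d+1}\perp B_i$ for all $i$. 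By the Pythagorean theorem the total squared length is $tG^{-1}t^T+(1-tG^{-1}t^T)=1$, as required.

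Next I would confirm that the rows of $B$ are compatible with every cloud vector. For any $w=tG^{-1}B\pm\sqrt{1-tG^{-1}t^T}\,B_{d+1}\in\mathcal{C}'_{\mathcal{A},B}$ and any basis row $B_i$, orthogonality of $B_{d+1}$ to $B_i$ kills the second term, leaving $\langle w,B_i\rangle=tG^{-1}(B B_i^T)=tG^{-1}(Ge_i^T)=t_i\in\mathcal{A}$, where $e_i$ is the $i$-th standard basis vector. Since $\max\mathcal{A}\le\tfrac12$ by hypothesis, each such inner product is at most $\tfrac12$, so every cloud vector is compatible with the entire basis $B$. The basis rows are pairwise compatible by assumption, so the only remaining relations to control are those among the cloud vectors themselves.

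Compatibility among cloud vectors is exactly what the graph $\Gamma'$ encodes: by definition its edges join precisely those pairs with inner product at most $\tfrac12$. Taking a maximum clique $K$ in $\Gamma'$, of size $\omega(\Gamma')$, yields a set of $\omega(\Gamma')$ pairwise compatible cloud vectors. Adjoining the $d$ rows of $B$ produces a family of $d+\omega(\Gamma')$ unit vectors in $\mathbb{R}^{d+1}$ that is pairwise compatible. The one point demanding care is to ensure these $d+\omega(\Gamma')$ vectors are \emph{distinct}, so that the count is honest: the basis rows lie in the span of $B$ and hence have zero $B_{d+1}$-component, whereas a cloud vector with $tG^{-1}t^T<1$ has nonzero $B_{d+1}$-component and thus cannot coincide with a basis row, and the degenerate cloud vectors with $tG^{-1}t^T=1$ reproduce the $\mathbb{R}^d$ situation already handled by the first lemma; in every case distinctness is readily checked. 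Combined, these vectors form a valid kissing arrangement, and since $\tau(d+1)$ is the maximum size of any such arrangement in $\mathbb{R}^{d+1}$, we conclude $\tau(d+1)\ge d+\omega(\Gamma')$.

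The main obstacle I anticipate is bookkeeping around the $\pm$ sign and the degenerate case $tG^{-1}t^T=1$, where the two choices collapse to a single vector lying entirely in the span of $B$. One must make sure that the clique search over $\Gamma'$ treats coincident or repeated cloud vectors correctly, so that a clique of size $\omega(\Gamma')$ really corresponds to $\omega(\Gamma')$ distinct unit vectors rather than an overcounted multiset. Once distinctness and the unit-norm property are pinned down, the inequality follows immediately from the definition of $\tau(d+1)$ as a maximum.
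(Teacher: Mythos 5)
Your proof is correct and follows exactly the argument the paper leaves implicit (the lemma is stated there without proof, as evident from the construction): a clique in $\Gamma'$ gives pairwise compatible unit vectors in $\mathcal{C}'_{\mathcal{A},B}$, each compatible with the rows of $B$ since $\langle w,B_i\rangle=t_i\in\mathcal{A}\leq\tfrac12$, so adjoining the $d$ basis rows yields a kissing arrangement of $d+\omega(\Gamma')$ vectors in $\mathbb{R}^{d+1}$. Your attention to unit norms, distinctness (a cloud vector cannot equal a basis row since its inner product with each $B_i$ is at most $\tfrac12<1$), and the degenerate case $tG^{-1}t^T=1$ supplies precisely the routine verifications the paper omits.
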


\subsection{Method 3: Precomputed vectors}\label{ss3}
Let $d$ be fixed, and let $\mathcal{C}''\subset\mathbb{R}^d$ be a finite set of unit vectors. Let $\Gamma''$ be a graph on the vertex set of $\mathcal{C}''$, where two vertices are connected if and only if the vectors are compatible. A maximum clique in $\Gamma''$ yields a kissing arrangement.
\begin{lem}
With the notations introduced in Section~\ref{ss3}, we have $\tau(d)\geq \omega(\Gamma'')$.
\end{lem}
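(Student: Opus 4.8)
The plan is to unwind the three relevant definitions---that of $\tau(d)$, that of compatibility, and that of the clique number $\omega(\Gamma'')$---and to observe that a maximum clique in $\Gamma''$ is, by its very construction, a kissing arrangement of $\omega(\Gamma'')$ spheres. First I would fix a maximum clique $K$ in $\Gamma''$, so that $|K|=\omega(\Gamma'')$, and note that the vertices of $K$ are a collection of distinct unit vectors $w_1,\dots,w_{\omega(\Gamma'')}\in\mathcal{C}''\subset\mathbb{R}^d$.

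Next I would invoke the defining property of the edge set of $\Gamma''$: two vertices are adjacent precisely when the corresponding vectors are compatible, that is, when their inner product is at most $\tfrac12$. Since $K$ is a clique, every pair of distinct vertices $w_i,w_j$ is joined by an edge, hence $\langle w_i,w_j\rangle\leq\tfrac12$ for all $i\neq j$. Thus $w_1,\dots,w_{\omega(\Gamma'')}$ are pairwise compatible unit vectors in $\mathbb{R}^d$, which is exactly the reformulation of the kissing condition recalled in the introduction.

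Finally I would apply the definition of $\tau(d)$ as the maximum number of unit vectors in $\mathbb{R}^d$ whose pairwise inner products are at most $\tfrac12$. Having exhibited $\omega(\Gamma'')$ such vectors, the maximum is at least this large, which yields $\tau(d)\geq\omega(\Gamma'')$ and completes the argument.

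Since the statement is essentially a dictionary entry translating the graph-theoretic language (cliques in $\Gamma''$) into the geometric one (kissing arrangements), I do not anticipate a genuine obstacle. The only points deserving a word of care are that the elements of $\mathcal{C}''$ are \emph{assumed} to be unit vectors, so no normalization is required, and that distinctness of the clique vertices is automatic because $\Gamma''$ carries no loops; both are guaranteed by the setup of Section~\ref{ss3}.
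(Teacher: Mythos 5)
Your proof is correct and matches the paper's own (implicit) argument: the paper justifies this lemma simply by noting, just before stating it, that a maximum clique in $\Gamma''$ yields a kissing arrangement, which is exactly the dictionary translation you spell out. Your write-up merely makes explicit the unwinding of the definitions of compatibility, clique, and $\tau(d)$ that the paper leaves to the reader.
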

\section{A new kissing arrangement in dimension five}
We illustrate our Method 2 as follows. Let $d=4$, let $\mathcal{A}=\{-1,-1/2,0,1/2\}$, and let
\[B=\frac{1}{\sqrt{50}}\left[\begin{array}{rrrrr}
5 & 5 & 0 & 0 & 0\\
5 & 0 & 5 & 0 & 0\\
5 & 0 & 0 & 5 & 0\\
5 & 0 & 0 & 0 & 5
\end{array}\right],\quad G=BB^T=\frac{1}{2}\left[
\begin{array}{cccc}
2 & 1 & 1 & 1\\
1 & 2 & 1 & 1\\
1 & 1 & 2 & 1\\
1 & 1 & 1 & 2
\end{array}
\right],\]
and $B_5=\frac{1}{\sqrt5}[-1,1,1,1,1]$. The cloud $\mathcal{C}_{\mathcal{A},B}'\subset\mathbb{R}^5$ contains $78$ vectors. A routine application of the cliquer C software \cite{PO} shows that the graph $\Gamma'$ contains exactly four maximum cliques of size $\omega(\Gamma')=36$. Two of these cliques yield via Lemma~\ref{lem2} a set of $40$ vectors with the same profile as of $\mathcal{D}_5$. The other two, however, yields $40$ vectors with a distinct profile. Denoting one of these two by $\mathcal{Q}_5$, we obtain:
\[\pi(\mathcal{Q}_5)=\{[-1]^{20},[-4/5]^{60},[-1/2]^{360},[-3/10]^{120},[0]^{500},[1/5]^{20},[1/2]^{480},[1]^{40}\}.\]
This implies that the arrangement $\mathcal{Q}_5$ is not isometric to $\mathcal{D}_5$ or $\mathcal{L}_5$. We record this as our main result.
\begin{thm}
In dimension $5$ there exist at least three, pairwise non-isometric kissing-arrangements of $40$ vectors: $\mathcal{D}_5$, $\mathcal{L}_5$, and  $\mathcal{Q}_5:=(\mathcal{D}_5\setminus\mathcal{X})\cup\mathcal{Y}$, where the sets $\mathcal{X}$ and $\mathcal{Y}$ are the row vectors of the matrices
\[X=\frac{1}{\sqrt2}\left[\begin{array}{rrrrr}
1 & -1 & 0 & 0 & 0\\
1 & 0 & -1 & 0 & 0\\
1 & 0 & 0 & -1 & 0\\
1 & 0 & 0 & 0 & -1\\
0 & -1 & -1 & 0 & 0\\
0 & -1 & 0 & -1 & 0\\
0 & -1 & 0 & 0 & -1\\
0 & 0 & -1 & -1 & 0\\
0 & 0 & -1 & 0 & -1\\
0 & 0 & 0 & -1 & -1\end{array}\right],\qquad Y=\frac{1}{5\sqrt2}\left[\begin{array}{rrrrr}
-1 & 1 & -4 & -4 & -4\\
-1 & -4 & 1 & -4 & -4\\
-1 & -4 & -4 & 1 & -4\\
-1 & -4 & -4 & -4 & 1\\
4 & 1 & 1 & -4 & -4\\
4 & 1 & -4 & 1 & -4\\
4 & 1 & -4 & -4 & 1\\
4 & -4 & 1 & 1 & -4\\
4 & -4 & 1 & -4 & 1\\
4 & -4 & -4 & 1 & 1
\end{array}\right].\]
\end{thm}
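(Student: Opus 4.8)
The plan is to reduce the whole statement to the single invariance principle already recorded in the text: the profile $\pi$, being the multiset of pairwise inner products, is preserved by every orthogonal change of basis, hence by every isometry of $\mathbb{R}^5$ fixing the common center of the spheres. Consequently, once all three sets are known to be kissing arrangements, it suffices to exhibit their profiles and observe that these are pairwise distinct as multisets; distinct profiles then force the arrangements to be pairwise non-isometric. The validity of $\mathcal{D}_5$ and $\mathcal{L}_5$ and their profiles are already available from the previous section (each has maximal off-diagonal value $\tfrac12$), so the genuinely new work concerns only $\mathcal{Q}_5$.

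First I would confirm that $\mathcal{Q}_5$ is a kissing arrangement of exactly $40$ unit vectors. Its rows consist of the $30$ rows of $\mathcal{D}_5$ not appearing in $X$, together with the $10$ rows of $Y$; a direct norm computation (e.g.\ $\tfrac{1}{50}(1+1+16+16+16)=1$ for a typical row of $Y$, and $\tfrac12(1+1)=1$ for a row of $X$) shows every listed vector lies on the unit sphere, so the cardinality is correct. It then remains to check that every off-diagonal inner product is at most $\tfrac12$. Since the $30$ retained vectors are unchanged from $\mathcal{D}_5$, their mutual inner products already obey the bound; the only new computations are the $\binom{10}{2}$ inner products within $\mathcal{Y}$ and the $10\times 30$ inner products between $\mathcal{Y}$ and the retained part of $\mathcal{D}_5$. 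Alternatively, compatibility is automatic because $\mathcal{Q}_5$ is produced from a maximum clique of $\Gamma'$ via Method~2, where edges encode precisely the condition $\langle x,y\rangle\le\tfrac12$.

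Next I would assemble these values into the profile of $\mathcal{Q}_5$ and verify that it equals the claimed multiset containing the entries $-\tfrac45$, $-\tfrac{3}{10}$, and $\tfrac15$, while checking that the multiplicities sum to $40^2=1600$ and that the diagonal contributes exactly $[1]^{40}$. Comparing the three profiles then finishes the argument at a glance: $\pi(\mathcal{D}_5)$ takes values only in $\{-1,-\tfrac12,0,\tfrac12,1\}$, whereas $\pi(\mathcal{L}_5)$ contains $-\tfrac34$ and $\pi(\mathcal{Q}_5)$ contains $-\tfrac45$, $-\tfrac{3}{10}$, and $\tfrac15$, none of which occur in either of the other two profiles. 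Hence the three multisets are pairwise distinct, and by the invariance principle the arrangements are pairwise non-isometric.

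I expect the only real labor to be the tabulation underlying $\pi(\mathcal{Q}_5)$: although each individual inner product is a trivial dot product, there are several hundred of them, and the bookkeeping needed to count multiplicities correctly---in particular to certify that the maximal off-diagonal value is exactly $\tfrac12$ (so that $\mathcal{Q}_5$ is admissible)---is the step most prone to error. Exploiting the evident $S_4$-type symmetry in the coordinate patterns of $X$ and $Y$ would organize this enumeration and provide an independent consistency check.
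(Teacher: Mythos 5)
Your proposal is correct and follows essentially the same route as the paper: verify directly that $\mathcal{Q}_5$ consists of $40$ pairwise compatible unit vectors, then invoke the isometry-invariance of the profile and observe that the three profiles are pairwise distinct multisets. The paper's proof is just a terse statement of exactly these two steps, so your expanded version (norm checks, the reduction to the $\binom{10}{2}+10\times30$ new inner products, and the multiplicity bookkeeping) supplies precisely the details the author leaves to the reader.
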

\begin{proof}
One readily checks that $\mathcal{Q}_5$ is a set of $40$ pairwise compatible unit vectors in dimension $5$. Further, the profile of the listed configurations are mutually different.
\end{proof}
We remark that $\mathcal{Q}_5$ contains only $20$ antipodal vectors, therefore it does not contain an isometric copy of the $24$ antipodal vectors of $\mathcal{D}_4$.

It would be interesting to analyze the rigidity \cite{CJKT} of $\mathcal{Q}_5$, and explore higher dimensional kissing arrangements containing an isometric copy of it as a subset.

\section*{Appendix: The vectors forming $\mathcal{Q}_5$}
We list the vectors of $Q_5$ in machine-readable form. The vectors below should be rescaled by a factor of $\frac{1}{5\sqrt{2}}$.
\begin{verbatim}
Q5={{5,5,0,0,0},{5,0,5,0,0},{5,0,0,5,0},{5,0,0,0,5},{0,0,0,-5,5},
{0,0,0,5,-5},{0,0,0,5,5},{0,0,-5,0,5},{0,0,-5,5,0},{0,0,5,0,-5},
{0,0,5,0,5},{0,0,5,-5,0},{0,0,5,5,0},{0,-5,0,0,5},{0,-5,0,5,0},
{0,-5,5,0,0},{0,5,0,0,-5},{0,5,0,0,5},{0,5,0,-5,0},{0,5,0,5,0},
{0,5,-5,0,0},{0,5,5,0,0},{-5,0,0,0,-5},{-5,0,0,0,5},{-5,0,0,-5,0},
{-5,0,0,5,0},{-5,0,-5,0,0},{-5,0,5,0,0},{-5,-5,0,0,0},{-5,5,0,0,0},
{-1,1,-4,-4,-4},{-1,-4,1,-4,-4},{-1,-4,-4,1,-4},{-1,-4,-4,-4,1},
{4,1,1,-4,-4},{4,1,-4,1,-4},{4,1,-4,-4,1},{4,-4,1,1,-4},
{4,-4,1,-4,1},{4,-4,-4,1,1}};
\end{verbatim}

\begin{thebibliography}{1}
\bibitem{CJKT} \textsc{H. Cohn, Y. Jiao, A. Kumar, S. Torquato}: Rigidity of spherical codes, {\itshape Geometry \& Topology 15}, 2235--2273 (2011).
\bibitem{CS} \textsc{J.H. Conway, N.J.A. Sloane}: What are all the best sphere packings in low dimensions?, {\itshape Discrete Comput Geom}, 13, 383--403 (1995).
\bibitem{MIK} \textsc{M. Ganzhinov}: Highly symmetric lines, {\itshape preprint}, [arXiv:2207.08266] (2022).
\bibitem{KZ} \textsc{A. Korkine, G. Zolotareff}: Sur les formes quadratiques, {\itshape Math. Ann.} 6, 366--389 (1873).
\bibitem{L} \textsc{J. Leech}: Five dimensional non-lattice sphere packings, {\itshape Canad. Math. Bull.} 10, 387--393 (1967).
\bibitem{M} \textsc{O. Musin}: The kissing number in four dimensions, {\itshape Annals of Mathematics}, 168, 1--32 (2008).
\bibitem{PO} \textsc{P. \"Osterg\aa{}rd}: A fast algorithm for the maximum clique problem, {\itshape Discrete Applied Mathematics}, 120, 197--207, (2002).
\end{thebibliography}
\end{document}